\newcommand{\eg}{e.\,g.,\ }
\newcommand{\Hn}{ {\mathbb{H}_n} }   
\newcommand{\Sn}{ {\mathbb{S}_n} }   
\newtheorem{remark}{Remark}[section]
\newtheorem{theorem}{Theorem}[section]
\newtheorem{lemma}{Lemma}[section]
\newtheorem{definition}{Definition}[section]
\newcommand{\T}{{\mathcal T}}
\newcommand{\E}{\mathcal{E}}
\newcommand{\A}{{\mathcal A}}
\renewcommand{\S}{{\mathcal S}}
\DeclareMathOperator{\vect}{vec}
\DeclareMathOperator{\diag}{diag}
\DeclareMathOperator{\In}{\mathsf{Inertia}}
\DeclareMathOperator{\rank}{rank}
\newcommand{\Hnpsd}{ {\mathbb{H}_n^{\raisebox{0.15em}{{\fontsize{3}{2}\selectfont $\geq$}}}} } 
\newcommand{\Hnpd}{ {\mathbb{H}_n^{\raisebox{0.2em}{{\fontsize{3}{2}\selectfont $>$}}}} }  
\newcommand{\ie}{i.\,e.\ }
\newcommand{\mat}[3][C]{
	\mathbb{#1}^{
		\IfSubStr{#2}{+}{(#2)}{#2}
		\times
		\IfSubStr{#3}{+}{(#3)}{#3}
	}
}
\DeclareDocumentCommand{\matz}{m m O{K} O{z}}{\mathbb{#3}[{#4}]^
	{
		\IfSubStr{#1}{+}{(#1)}{#1}
		\times
		\IfSubStr{#2}{+}{(#2)}{#2}
	}}
\title{Structured backward errors for eigenvalues of linear port-Hamiltonian descriptor systems}
\author{Volker Mehrmann\footnotemark[1], Paul Van Dooren\footnotemark[2]
}
\date{\today}
\begin{document}
\maketitle

\begin{abstract} When computing the eigenstructure of matrix pencils associated with the passivity analysis of perturbed port-Hamiltonian descriptor system using a structured generalized eigenvalue method, one should make sure that the computed spectrum satisfies the symmetries that corresponds to this structure and the underlying phy\-si\-cal system. We perform a backward error analysis and show that for matrix pencils associated with port-Hamiltonian descriptor systems and a given computed eigenstructure with the correct symmetry structure there always exists a nearby port-Hamiltonian descriptor system with exactly that eigenstructure. We also derive bounds for how near this system is and show that the stability radius of the system plays a role in that bound.  \hfill 
\end{abstract}
{\bf Keywords:} Backward error, port-Hamiltonian descriptor system, eigenvalue, eigenvectors,   \\
{\bf AMS Subject Classification}: 93D09, 93C05, 49M15, 37J25

\footnotetext[1]{
Institut f\"ur Mathematik MA 4-5, TU Berlin, Str.\ des 17.\ Juni 136,
D-10623 Berlin, FRG.
\url{mehrmann@math.tu-berlin.de}. Supported by Deutsche Forschungsgemeinschaft via Project A02 within CRC 910 `Control of self-organized nonlinear systems' and priority program SPP 1984 through the project 'Computational Strategies for Distributed Stability Control
in Next-Generation Hybrid Energy Systems
'.}
\footnotetext[2]{
Department of Mathematical Engineering, Universit\'e catholique de Louvain, Louvain-La-Neuve, Belgium.
\url{paul.vandooren@uclouvain.be}. Supported by the {\it Deutsche Forschungsgemeinschaft},
through CRC 910 `Control of self-organized nonlinear systems'.
}

\section{Introduction} \label{sec:intro}
We study the perturbation analysis of the eigenstructure (finite and infinite eigenvalues, left and right eigenvectors) of matrix pencils associated the passivity analysis of linear time-invariant \emph{descriptor systems (generalized state-space systems)}  of the form
\begin{equation} \label{gstatespace}
 \begin{array}{rcl} E \dot x(t) & = & Ax(t) + B u(t),\ x(0)=0,\\
y(t)&=& Cx(t)+Du(t),
\end{array}
\end{equation}
where $u:\mathbb R\to\mathbb{C}^m$,   $x:\mathbb R\to \mathbb{C}^n$,  and  $y:\mathbb R\to\mathbb{C}^m$  are vector-valued functions denoting, respectively, the \emph{input}, \emph{state},
and \emph{output} of the system. Denoting real and complex $n$-vectors ($n\times m$ matrices) by $\mathbb R^n$, $\mathbb C^{n}$ ($\mathbb R^{n \times m}$, $\mathbb{C}^{n \times m}$), respectively, the coefficient matrices satisfy $A,E \in \mathbb{C}^{n \times n}$,   $B\in \mathbb{C}^{n \times m}$, $C\in \mathbb{C}^{m \times n}$, and  $D\in \mathbb{C}^{m \times m}$.  Note that we require that input and output dimensions are both equal to $m$; and that $sE-A$ is a  \emph{square regular pencil} $sE-A$, \ie $\det (sE-A)$ does not vanish identically for all $s\in \mathbb C$.

We will particularly focus on systems that are  \emph{positive real or passive} and their \emph{port-Hamiltonian realizations} (see next section). Our work is motivated by two applications, the first is the perturbation analysis arising from computational methods to compute the eigenstructure \cite{GolV96,Meh91,Van79,Van81b} and the second arises from the need to obtain small perturbations that bring the system back to this structure when it has been destroyed in the process of discretization,  model reduction, or other  computational techniques, \cite{AlaBKMM10,Bru11,BruS12,FreJ04,FreJV07,Gri03,Gri04,GusS01,OveV05}. In both applications the eigenstructure of an originally  passive system is perturbed due to  perturbations in the process. And then one either wants to determine a nearby passive system with the perturbed eigenstructure (if this exists) or one wants to perturb the eigenstructure so that it is that of a nearby passive system \cite{GilS18}.  A similar problem arises in stability analysis and the computation of stability radii and smallest pertubations that make a system stable \cite{GilMS18,GilS17,MadO95}. While most of these mentioned previous works are for standard passive  systems, here we deal with descriptor systems, as they arise from the linearization around stationary solutions of general systems of differenential-algebraic equations \cite{BieCM12,Dai89,KunM06,Meh91}.

Throughout this article we will use the following notation. The Hermitian (or conjugate) transpose (transpose) of a vector or matrix $V$ is denoted by
$V^{\mathsf{H}}$ ($V^{\mathsf{T}}$) and the identity matrix is denoted by $I_n$ or $I$ if the dimension is clear.
We denote the set of Hermitian and skew-Hermitian matrices in $\mathbb{C}^{n \times n}$, respectively, by $\Hn$ and $\Sn$.
Positive definiteness (semi-definiteness) of  $A\in \Hn$ is denoted by $A>0$ ($A\geq 0$). The set of all positive definite (positive semidefinite) matrices in $\Hn$ is denoted $\Hnpd$ ($\Hnpsd$). With $\In(H)$ of a Hermitian matrix $H$ we denote the triple of integers $\{p,n,z\}$ of numbers of positive, negative and zero eigenvalues of $H$.
The real and imaginary parts of a complex matrix $Z$ are written as $\Re (Z)$ and $\Im (Z)$, respectively, and $\imath$ is the imaginary unit.
The 2-norm of a matrix $M$ will be denoted by $\|M\|_2$ and the Frobenius norm by $\|M\|_F$. The Frobenius norm of a list of matrices $M_i,i=1,\ldots,k$ is defined as $\|(M_1, \ldots,M_k)\|_F:=\sqrt{\sum_{i=1}^k \|M_i\|_F^2}$.

The eigenstructure of matrix pencils is characterized by the \emph{Kronecker canonical form}.
\begin{theorem}\label{th:kcf}
Let $E,A\in {\mathbb C}^{n,m}$. Then there exist nonsingular matrices
$S\in {\mathbb C}^{n,n}$ and $T\in {\mathbb C}^{m,m}$ such that
\begin{equation}\label{kcf}
S(\lambda E-A)T=\diag({\cal L}_{\epsilon_1},\ldots,{\cal L}_{\epsilon_p},
{\cal L}^\top_{\eta_1},\ldots,{\cal L}^\top_{\eta_q},
{\cal J}_{\rho_1}^{\lambda_1},\ldots,{\cal J}_{\rho_r}^{\lambda_r},{\cal N}_{\sigma_1},\ldots,
{\cal N}_{\sigma_s}),
\end{equation}
where the block entries have the following properties:
\begin{enumerate}
\item[\rm (i)]
Every entry ${\cal L}_{\epsilon_j}$ is a bidiagonal block of size
${\epsilon_j}\times ({\epsilon_j+1})$, $\epsilon_j\in{\mathbb N}_0$,
of the form
\[
\lambda\left[\begin{array}{cccc}
1&0\\&\ddots&\ddots\\&&1&0
\end{array}\right]-\left[\begin{array}{cccc}
0&1\\&\ddots&\ddots\\&&0&1
\end{array}\right].
\]
\item[\rm (ii)]
Every entry ${\cal L}^\top_{\eta_j}$ is a bidiagonal block of size
$({\eta_j+1})\times {\eta_j}$, $\eta_j\in{\mathbb N}_0$,
of the form
\[
\lambda\left[\begin{array}{ccc}
1\\0&\ddots\\&\ddots&1\\&&0
\end{array}\right]-
\left[\begin{array}{ccc}
0\\1&\ddots\\&\ddots&0\\&&1
\end{array}\right].
\]
\item[\rm (iii)]
Every entry ${\cal J}_{\rho_j}^{\lambda_j}$ is a Jordan block of size
${\rho_j}\times{\rho_j}$, $\rho_j\in{\mathbb N}$, $\lambda_j\in{\mathbb C}$,
of the form
\[
\lambda\left[\begin{array}{cccc}
1\\&\ddots\\&&\ddots\\&&&1
\end{array}\right]-
\left[\begin{array}{cccc}
\lambda_j&1\\&\ddots&\ddots\\&&\ddots&1\\&&&\lambda_j
\end{array}\right].
\]
\item[\rm (iv)]
Every entry ${\cal N}_{\sigma_j}$ is a nilpotent block of size
${\sigma_j}\times {\sigma_j}$, $\sigma_j\in{\mathbb N}$,
of the form
\[
\lambda\left[\begin{array}{cccc}
0&1\\&\ddots&\ddots\\&&\ddots&1\\&&&0
\end{array}\right]-
\left[\begin{array}{cccc}
1\\&\ddots\\&&\ddots\\&&&1
\end{array}\right].
\]
\end{enumerate}
The Kronecker canonical form is unique up to permutation of the blocks.
\end{theorem}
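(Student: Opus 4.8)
The plan is to establish existence by a two-stage reduction under \emph{strict equivalence} $\lambda E-A\mapsto S(\lambda E-A)T$ with $S\in\mathbb{C}^{n\times n}$, $T\in\mathbb{C}^{m\times m}$ nonsingular: first strip off all singular blocks $\mathcal{L}_{\epsilon_j}$ and $\mathcal{L}^\top_{\eta_j}$ until what remains is a \emph{regular} square pencil, and then reduce that regular part to Weierstrass form, which produces the Jordan blocks $\mathcal{J}_{\rho_j}^{\lambda_j}$ (finite spectrum) and the nilpotent blocks $\mathcal{N}_{\sigma_j}$ (eigenvalue at infinity). Uniqueness is then obtained by attaching to each structural datum a quantity that is manifestly invariant under strict equivalence and that counts it.

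For the singular part I would induct on $n+m$. Suppose $\lambda E-A$ has a nontrivial polynomial right kernel and pick $x(\lambda)=\sum_{i=0}^{\epsilon}\lambda^i x_i\neq 0$ of minimal degree $\epsilon$; comparing coefficients in $(\lambda E-A)x(\lambda)=0$ yields $Ax_0=0$, $Ax_i=Ex_{i-1}$ for $1\le i\le\epsilon$, and $Ex_\epsilon=0$. The crucial lemma — and the step I expect to be the main obstacle — is that minimality of $\epsilon$ forces $x_0,\dots,x_\epsilon$ to be linearly independent and $Ex_0,\dots,Ex_{\epsilon-1}$ to be linearly independent; completing these to bases of $\mathbb{C}^m$ and of $\mathbb{C}^n$, respectively, gives $S,T$ with $S(\lambda E-A)T=\diag(\mathcal{L}_\epsilon,\ \lambda E'-A')$, where the delicate point is precisely that the coupling block between $\mathcal{L}_\epsilon$ and the remainder can be cleared — again exploiting that $\epsilon$ is minimal, so that no shorter right kernel vector is hidden in the coupling. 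Induction then removes all $\mathcal{L}_{\epsilon_j}$; applying the same argument to $(\lambda E-A)^\top$ removes all $\mathcal{L}^\top_{\eta_j}$; and a pencil with neither a polynomial right nor a polynomial left kernel must be square with $\det(\lambda E-A)\not\equiv 0$, i.e.\ regular. (Alternatively, the staircase deflation algorithm of \cite{Van79} makes this reduction fully constructive with unitary $S,T$.)

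For a regular $n\times n$ pencil, choose $\mu\in\mathbb{C}$ with $\mu E-A$ nonsingular (possible since $\det(\lambda E-A)\not\equiv 0$) and pass to $(\mu E-A)^{-1}(\lambda E-A)=(\lambda-\mu)\widehat E+I$ with $\widehat E=(\mu E-A)^{-1}E$. Split $\mathbb{C}^n=\operatorname{im}\widehat E^{\,n}\oplus\ker\widehat E^{\,n}$ into the two $\widehat E$-invariant subspaces, on which $\widehat E$ is respectively invertible and nilpotent; in an adapted basis the pencil becomes block diagonal. On the first block, multiplying by the inverse of the invertible part and shifting gives $\lambda I-M$, and the Jordan canonical form of $M$ (used here as a known result) yields the blocks $\mathcal{J}_{\rho_j}^{\lambda_j}$. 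On the second block the pencil is strictly equivalent, after elementary scalings, to $\lambda N-I$ with $N$ nilpotent, and the Jordan form of $N$ yields the blocks $\mathcal{N}_{\sigma_j}$. Reassembling proves \eqref{kcf}.

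Finally, for uniqueness I would verify that strict equivalence preserves: the Smith normal form of $\lambda E-A$ over $\mathbb{C}[\lambda]$, hence its finite elementary divisors $(\lambda-\lambda_j)^{\rho_j}$ and thus the blocks $\mathcal{J}_{\rho_j}^{\lambda_j}$; the Smith form at $\zeta=0$ of the reversed pencil $E-\zeta A$, hence the degrees $\sigma_j$ of the infinite elementary divisors and thus the $\mathcal{N}_{\sigma_j}$; and the ranks of the block-Toeplitz matrices formed from $E$ and $A$ that govern the solvability of $(\lambda E-A)x(\lambda)=0$ degree by degree, which determine the right minimal indices $\epsilon_j$ and, by transposition, the left minimal indices $\eta_j$. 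Since these quantities are all strict-equivalence invariants and the form \eqref{kcf} realizes any prescribed list of them, two canonical forms of the same pencil must agree up to a permutation of their blocks.
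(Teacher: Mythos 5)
The paper itself contains no proof of Theorem~\ref{th:kcf}: this is Kronecker's classical canonical form, stated as background (with the surrounding text pointing to \cite{Van79,Van81b} and the standard literature), so there is no in-paper argument to measure yours against. Your outline is the classical Gantmacher-style proof and its architecture is sound: peel off $\mathcal{L}_{\epsilon}$ blocks via a minimal-degree polynomial right null vector, dualize to remove the $\mathcal{L}^{\top}_{\eta}$ blocks, observe that what remains is square and regular, reduce it to Weierstrass form by splitting $\widehat E=(\mu E-A)^{-1}E$ into its invertible and nilpotent parts (your identity $(\mu E-A)^{-1}(\lambda E-A)=(\lambda-\mu)\widehat E+I$ and the handling of the nilpotent part are correct), and obtain uniqueness from strict-equivalence invariants (finite and infinite elementary divisors plus left and right minimal indices).

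That said, as written this is a roadmap rather than a complete proof, and the missing pieces are exactly the nontrivial content of the theorem, as you yourself acknowledge. The linear independence of $x_0,\dots,x_\epsilon$ and of $Ex_0,\dots,Ex_{\epsilon-1}$ under minimality of $\epsilon$, and the annihilation of the coupling block between $\mathcal{L}_\epsilon$ and the complementary pencil (which amounts to solving a linear matrix equation of the form $\mathcal{L}_\epsilon(\lambda)X-Y(\lambda E'-A')=\lambda D_1-D_0$, solvable precisely because $\epsilon$ is the smallest right minimal index), are where all the work lies; asserting them with a pointer to where the difficulty sits does not discharge them. Similarly, the uniqueness argument needs the verification that the ranks of the block-Toeplitz expansion matrices of the canonical form actually recover the list $\epsilon_1,\dots,\epsilon_p$, which you state but do not check. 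If you supply these lemmas (all standard, e.g.\ Gantmacher, \emph{The Theory of Matrices}, Vol.~II, Ch.~XII), the proof is complete and is the canonical one; since the paper quotes the theorem without proof, there is no alternative in-paper route to compare it with.
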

%
A value $\lambda_0\in\mathbb C$ is called a (finite) generalized eigenvalue of $\lambda E-A$ if
$\operatorname{rank}(\lambda_0E-A)<\max_{\alpha\in\mathbb C}
\operatorname{rank}(\alpha E-A)$, and $\lambda_0=\infty$ is said to be an eigenvalue of $\lambda E-A$ if zero is an eigenvalue of $\lambda A-E$.
The blocks $\mathcal J_{\rho_j}$ as in (iii) are associated with the
finite eigenvalues of $\lambda E-A$, and the blocks $\mathcal N_{\sigma_j}$ as in (iv) correspond to the
eigenvalue $\infty$.
The size of the largest block ${\cal N}_{\sigma_j}$ is
called the \emph{index} $\nu$ of the pencil $\lambda E-A$, where, by convention,  $\nu=0$ if $E$ is invertible.
The matrix pencil $\lambda E-A$ is called \emph{regular} if $n=m$ and
$\operatorname{det}(\lambda_0 E-A)\neq 0$ for some $\lambda_0 \in \mathbb C$,
otherwise it is called \emph{singular}.

\subsection{Positive-realness, passivity, and port-Hamiltonian systems }\label{sec:prelim}
 By applying the Laplace transform to \eqref{gstatespace} and eliminating the state, we obtain the  \emph{transfer function}
\begin{equation} \label{EABCD}
\mathcal T(s):=D+C(sE-A)^{-1}B, 
\end{equation}
mapping the Laplace transform of $u$ to the Laplace transform of $y$.
On the imaginary axis $\imath \mathbb R$, $\mathcal T(\imath\omega)$
describes the \emph{frequency response} of the system. We have the following extensions of the concepts of positive realness and passivity to descriptor systems, see \eg \cite{Bru11,FreJ04}.
\begin{definition}\label{def:posreal}~
\begin{enumerate}
\item A transfer function $\T(s)$ as in \eqref{EABCD} is {\em positive real}
if it is i) analytic in the open right half complex plane (including $\infty$), and
ii) $\Phi(s):= \T(s)+[\T(s)]^{\mathsf{H}}\geq 0$ for all $s$ in the closed right half  complex plane. Moreover,
$\mathcal T(s)$ is \emph{strictly positive real} if $\Phi(s)>0$ for all  $s$ in the closed right half  complex plane.
\item
A system of the form (\ref{gstatespace}) is \emph{passive} if there exists a state-dependent
\emph{storage function}, $\mathcal H(x)\geq 0$, such that for any $t_1>t_0\in \mathbb R$
 the  \emph{dissipation inequality}
\begin{equation} \label{supply} \mathcal H(x(t_1))-\mathcal H(x(t_0)) \le \int_{t_0}^{t_1} \Re (y(t)^{\mathsf{H}}u(t)) \, dt
\end{equation}
holds.
If for all $t_1>t_0$, inequality \eqref{supply}
is strict then the system is called \emph{strictly passive}.
\end{enumerate}
\end{definition}
It is well-known, see \eg \cite{BeaMV19,FreJ04}, that a system with regular   pencil $sE-A$ that is controllable ($\rank [\,s E-A, \ B\,] =n$ for all $s\in \mathbb C$), and \emph{observable} ($\rank [\, s E^\mathsf{H}-A^\mathsf{H},\ C^\mathsf{H}\,] =n $ for all $s\in \mathbb C$) is passive if and only if it is positive real
and \emph{stable} (all finite eigenvalues of $sE-A$ are in the closed left half complex plane, and those on the imaginary axis including $\infty$ are \emph{semisimple}), and it is strictly passive if and only if it is strictly positive real and \emph{asymptotically stable} (all finite eigenvalues of $sE-A$ are in the open left half complex plane, and the infinite eigenvalues are \emph{semisimple}).

In recent years, the  special class of port-Hamiltonian (pH) realizations of passive systems has received a lot attention.
PH systems are a tool for energy-based modeling, see \cite{SchJ14}; with the energy storage function
$\mathcal{H}(x)=\frac12x^{\mathsf{H}}Ex$, the dissipation inequality (\ref{supply}) holds and so pH systems are always passive.  The (robust) representation of passive systems as pH systems has been analyzed  in \cite{BeaMV19,BeaMX15_ppt}, and in the extension to pH descriptor systems in \cite{BeaMXZ18,MehM19,Sch13,SchM18}.
\begin{definition}\label{def:ph}
A linear time-invariant \emph{port-Hamiltonian (pH) descriptor system} has the generalized state-space form
\begin{equation} \label{pH}
 \begin{array}{rcl} E \dot x  & = & (J-R) x + (G-P) u,\\
y&=& (G+P)^{\mathsf{H}}x+(S-N)u,
\end{array}
\end{equation}
where the coefficient matrices satisfy the symmetry conditions
\begin{equation}\label{symmcond}
\mathcal V:= \left[ \begin{array}{cccc} J & G \\ -G^{\mathsf{H}} & N \end{array} \right]=-\mathcal V^{\mathsf{H}},\
\mathcal W:= \left[ \begin{array}{cccc} R & P \\ P^{\mathsf{H}} & S \end{array} \right] =\mathcal W^{\mathsf{H}}\geq 0, \  E=E^{\mathsf{H}} \ge 0.
\end{equation}
\end{definition}
The correspondence with the generalized state-space realization \eqref{gstatespace} is given via
 $A=J-R$, $B=G-P$, $C^\mathsf{H}=G+P$, and $D=S-N$.

 The pH representation seems to be a very robust representation \cite{MehMS16,MehMS17}, it allows easy ways for structure preserving model reduction \cite{BeaG11,GugPBS12,PolS10} and it greatly simplifies optimization methods for computing stability and passivity radii \cite{GilMS18,GilS17,GilS18,OveV05}.

\subsection{Eigenstructure computation}\label{sec:eigen}
To analyze whether a system is passive, one can compute the eigenstructure of the \emph{para-Hermitian matrix function (even matrix pencil)} ${\mathcal T}(s)+{\mathcal T}^\mathsf{H}(-s)$, where ${\mathcal T}(s):=C(sE-A)^{-1}B+D$, which is given by
\begin{equation} \label{system}
{\mathcal S}(s) := s \left[ \begin{array}{ccc} 0 & E & 0 \\ -E^\mathsf{H} & 0 & 0 \\ 0 & 0 & 0 \end{array} \right]-
\left[ \begin{array}{ccc} 0 & A & B \\ A^{\mathsf{H}} & 0 & C^\mathsf{H} \\ B^\mathsf{H} & C & D^\mathsf{H}+D \end{array} \right].
\end{equation}
An often more advantageous representation of this pencil (in the context of pH systems) is obtained by applying a congruence transformation.
Consider the unitary matrix
\[
X:= \frac{1}{\sqrt{2}}\left[ \begin{array}{ccc} I_n & I_n \\ I_n & -I_n  \end{array} \right],
\]
and $\hat X := \diag(X, \frac{1}{\sqrt{2}} I_m)$,
then one can form the specially structured even pencil
\begin{equation} \label{systemeq}
\hat \S(s) := \hat X^\mathsf{H} {\mathcal S}(s) \hat X =  s \left[ \begin{array}{ccc} 0 & E & 0 \\ -E^\mathsf{H} & 0 & 0 \\ 0 & 0 & 0 \end{array} \right]-
 \left[ \begin{array}{ccc} -R & -J & G \\ -J^{\mathsf{H}} & R & -P \\ G^\mathsf{H} & -P^\mathsf{H} & S \end{array} \right].
\end{equation}
The system is passive if the pencil (\ref{system}) (or equivalently the pencil (\ref{systemeq})) is regular,  has no purely imaginary eigenvalues and is of index at most one, see \cite{FreJ04}, so computing the eigenvalues and the structure at $\infty$ allows to check passivity.

In view of this fact it is important to understand the perturbation  theory and the backward error analysis for the pencils (\ref{system}) (or equivalently the pencil (\ref{systemeq})). In this respect, the advantage of the form \eqref{systemeq} is that perturbations  can be mapped back directly to the data matrices $\{E,J,R,G,P,S\}$, while in (\ref{system}) this holds for the data matrices $\{E,A,B,C,D\}$. In both cases, for the backward error analysis, we  should also make sure that an arbitrary perturbation of the pencil can be mapped back in a \emph{structured} sense to perturbations in the data matrices, meaning (i) that the zero blocks should not be perturbed, (ii) that the perturbed matrices $E$, $\mathcal W$ should remain Hermitian, positive semidefinite, and $J$ skew-Hermitian, and (iii) that the repeated block entries  should have repeated perturbations as well.

There exist simple and well-conditioned transformations to go back and forth between the two representations~\eqref{gstatespace} and~\eqref{pH}, since
\[ \left[ \begin{array}{ccc} - R & G & S \\ J & -P & -N \end{array} \right]=  \frac{1}{\sqrt{2}} X
\left[ \begin{array}{ccc} A & B & D \\ A^\mathsf{H} & C^\mathsf{H} &  D^\mathsf{H} \end{array} \right],
\]
and
\[
  \left[ \begin{array}{ccc} A & B & D \\ A^\mathsf{H} & C^\mathsf{H} &  D^\mathsf{H} \end{array} \right] =  {\sqrt{2}}
   X^\mathsf{H}   \left[ \begin{array}{ccc} - R & G & S \\ J & -P & -N \end{array} \right].
\]
Thus for a perturbation analysis we can use either of the two system matrices.
We will consider the perturbations of the system pencil
\eqref{system} and then show how to extend them to the pencil \eqref{systemeq}.

\subsection{Backward error analysis}\label{sec: backward}
Let us assume that we have determined (via a computational method) an approximate eigenstructure  of the pencil ${\mathcal S}(s):=s{\mathcal E}-{\mathcal A}$. A backward error analysis yields that this eigenstructure  corresponds to the exact eigenstructure of a perturbed pencil
\[
({\mathcal S} +\Delta_{\mathcal S})(s) :=s({\mathcal E}+\Delta_{\mathcal E})-({\mathcal A}+\Delta_{\mathcal A}),
\]
where  $\|(\Delta_{\mathcal E},\Delta_{\mathcal A})\|_F \approx \epsilon \|({\mathcal E},{\mathcal A})\|_F$ and $\epsilon$ is the perturbation level. If the eigenstructure is determined by a backward stable algorithm, then $\epsilon$ is a small multiple of  the machine precision (round-off unit), but in other approximations it may be much larger, \eg when the perturbation arises from model reduction or other approximations.)
But even if the relative perturbation  $(\Delta_{\mathcal E},\Delta_{\mathcal A})$
is small, it is likely to destroy the structure present in the original pencil $s{\mathcal E}-{\mathcal A}$.

In view of this, in this paper, we study the following  questions.
\begin{enumerate}
	\item Does the perturbed (computed) eigenstructure correspond to that of a pencil with the same block and symmetry structure, \ie that of a port-Hamiltonian descriptor system.
	\item If the answer to the first question is positive, then what is the nearest  port-Hamiltonian descriptor system that has exactly this eigenstructure?
\item If the answer to the first question is negative, then what is the nearest  port-Hamiltonian descriptor system.
\end{enumerate}
Related questions have already been studied  in \cite{BanMNV20,BeaMV19,GilMS18,GilS18,MadO95,MehV19,MehV20,OveV05} in the context of finding best pH representations of stable and passive systems and the computation of stability and passivity radii of linear time-invariant dynamical systems. However, all these papers mainly deal with the classical port-Hamiltonian systems, \ie the case $E=I$; here we study pH descriptor
systems, which have extra properties that need to be incorporated \cite{MehMW18,MehMW20}.

\subsection{Stability Radii}\label{sec:stab}
A lower bound for the backward errors  that one can expect is the \emph{stability radius} of the generalized eigenvalue problem $sE-A$, since pH systems are guaranteed to be stable. The stability radius $\rho(E,A)$ of a pencil $sE-A$ is defined as the smallest perturbation $\| (\Delta_E,\Delta_A) \|_F$ that causes $s(E+\Delta_E)-(A+\Delta_A)$ to be on the border of the stability region \cite{GenV99}. In the descriptor case this happens when an eigenvalue reaches the imaginary axis, when the system has an index $\geq 0$ or when the pencil becomes singular \cite{DuLM13}.

In general, to characterize the smallest perturbation that makes a pencil singular is an open problem for unstructured descriptor systems \cite{ByeHM98,GugLM17,MehMW15} and requires very  complex optimization methods even in special cases.
However, for pH descriptor systems it has recently been shown in \cite{MehMW20} that these distances are easily characterized. Actually the distance to singularity is given by the smallest perturbation that generates a common nullspace of $E,J,R$, while actually the distance to instability and the structured distance to the nearest problem with an index $\nu \geq 2$ are the same and are characterized by the smallest perturbation that generates a common nullspace of $E$ and $R$ under structure preservering perturbations.

The classical stability radius is given by
	\begin{equation}
	\label{stabrad}
	 \rho(E,A)=\inf_{\|(\Delta_E,\Delta_A)\|_F} \{\Lambda(E+\Delta_E,A+\Delta_A)\cap\imath\Re \neq \emptyset\} =
	\inf_\omega \sigma_n(A-\imath \omega E)/\sqrt{1+\omega^2},
	\end{equation}
and the minimizing perturbation can be constructed from the $n$-th singular value triple $(\sigma_n,u_n,v_n)$ at the minimizing frequency $\omega$,
\begin{equation}
	\label{stabpert} \Delta_E := \imath\omega\sigma_n u_nv_n^\mathsf{H}/(1+\omega^2), \quad \Delta_A:=\sigma_n u_nv_n^\mathsf{H}/(1+\omega^2).
\end{equation}
For large scale system with pH structure, recently  a computational method to compute the stability radius has been derived in \cite{AliMM20}.

The paper is organized as follows. In the next Section \ref{sec:necsuf}, we construct a congruence transformation that restores the special structure of the pencil $s\E-\A$ and we compute upper bounds for its departure from the identity. In Section \ref{sec:numerical} we illustrate the results of Section \ref{sec:necsuf} with a number of numerical experiments. In Section \ref{sec:conclusion} we end with a few concluding remarks.

\section{Computing structured perturbation matrices that realize backward errors} \label{sec:necsuf}
In this section we address the first question whether an eigenstructure associated with a system of the form \eqref{gstatespace} corresponds to that of a pencil associated with a pH descriptor system. Assume that $sE-A$ is a regular pencil and  that
i) $\rank [\,s E-A, \ B\,]=n$ for all $s\in \mathbb C$, \ie the system is \emph{controllable}, and ii)  $\rank [\,s E^\mathsf{H}-A^\mathsf{H}, \ C^\mathsf{H}\,]=n $ for all $s\in \mathbb C$, \ie the system is \emph{observable}, see \cite{BunBMN99,Meh91} for a detailed discussion.
Before we characterize structured backward errors we need the following lemma.
\begin{lemma}\label{lem:lem1}
Consider a controllable and observable descriptor system of the form~\eqref{gstatespace} associated with a strictly passive pH descriptor system of the form \eqref{pH} and with $E$ positive definite.
If $s\E-\A$ is a regular pencil, then the finite generalized eigenvalues of $s\E-\A$ are symmetric with respect to the imaginary axis and there are exactly $m$ semisimple infinite generalized eigenvalues. Moreover,
\[
\In (\imath\E)=\{n,n,m\}, \quad \In(\A-\imath \omega \E) = \{n+m,n,0\},\; {\mathrm for\, all}\; \omega\in \mathbb R.
\]
\end{lemma}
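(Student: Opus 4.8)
The plan is to work with the even pencil $s\hat\S-\hat\S_0$ in \eqref{systemeq}, since there the matrices $E$, $J=-J^{\mathsf H}$, and the block $\mathcal W\ge 0$ appear explicitly, and to combine three ingredients: the even structure (giving the $\pm$-symmetry of the spectrum about $\imath\mathbb R$), a Schur-complement computation (locating the infinite eigenvalues and counting them), and an inertia/congruence argument (pinning down $\In(\A-\imath\omega\E)$ uniformly in $\omega$).

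First I would record the even structure: $\hat\S(s)=s\hat\E-\hat\A$ with $\hat\E=-\hat\E^{\mathsf H}$ (so $\imath\hat\E$ Hermitian) and $\hat\A=\hat\A^{\mathsf H}$. From $\hat\A=\hat\S(-\bar s)^{\mathsf H}$-type relations one gets that $\lambda$ is a (finite) eigenvalue iff $-\bar\lambda$ is, which is exactly symmetry about the imaginary axis; this also forces $\hat\E$ to have the inertia dictated by its block form. Writing out $\imath\hat\E$ one sees it is $\begin{bmatrix}0&\imath E&0\\-\imath E&0&0\\0&0&0\end{bmatrix}$ (up to the congruence $\hat X$), whose nonzero part is a standard $\begin{bmatrix}0&\imath E\\-\imath E&0\end{bmatrix}$ block; since $E>0$, this $2n\times 2n$ block is congruent to $\diag(I_n,-I_n)$, giving $n$ positive, $n$ negative eigenvalues, and the trailing $m\times m$ zero block contributes the $m$ zeros. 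Hence $\In(\imath\hat\E)=\{n,n,m\}$, and since congruence by $\hat X$ preserves inertia, $\In(\imath\E)=\{n,n,m\}$ as well.

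Next, for the infinite eigenvalues I would use that $\lambda=\infty$ is an eigenvalue of $s\E-\A$ iff $0$ is an eigenvalue of $s\A-\E$, equivalently $\E$ is singular, with the Jordan structure at $\infty$ read off from how $\E$ interacts with $\A$ on $\ker\E$. Here $\ker\E$ is the last $m$ coordinates; restricting $\A$ to this subspace picks out the $(3,3)$ block, which is $D^{\mathsf H}+D=2S$ in \eqref{system} (or $S$ after the $\hat X$-congruence up to the scaling). Strict passivity gives $\mathcal W>0$ on the relevant complement, and in particular the Schur complement argument shows this block is invertible, so there is no Jordan chain of length $\ge 2$ at $\infty$: the $m$ infinite eigenvalues are semisimple. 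I should be a little careful to phrase the nonsingularity of the trailing block correctly — strict positive realness gives $\Phi(\infty)=D+D^{\mathsf H}>0$, hence $S>0$ after congruence — and to invoke controllability/observability so that no spurious singular part appears and the pencil stays regular, so that exactly $m$ (and not fewer) eigenvalues sit at $\infty$, the remaining $2n$ being finite and coming in the claimed $\pm$-pairs.

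For the last assertion, fix $\omega\in\mathbb R$ and consider $\A-\imath\omega\E$. Using the block forms, $\A-\imath\omega\E = \begin{bmatrix} 0 & A-\imath\omega E & B\\ (A-\imath\omega E)^{\mathsf H} & 0 & C^{\mathsf H}\\ B^{\mathsf H} & C & D+D^{\mathsf H}\end{bmatrix}$; equivalently in the $\hat X$-coordinates the $(1,1)$, $(2,2)$ blocks become $\mp R$ and the trailing block $S>0$. The clean way is: this Hermitian matrix is congruent (via a block permutation/the $\hat X$ transform) to one whose Schur complement onto the first $2n$ block, after eliminating the invertible $S$-block, is $\begin{bmatrix}0 & A-\imath\omega E\\ (A-\imath\omega E)^{\mathsf H}&0\end{bmatrix}$ minus a positive-semidefinite correction $\begin{bmatrix}G\\-P\end{bmatrix}S^{-1}\begin{bmatrix}G^{\mathsf H}&-P^{\mathsf H}\end{bmatrix}$, and the full $\mathcal W>0$ assumption is what makes the relevant $2n\times 2n$ Hermitian matrix nonsingular with inertia $\{n,n,0\}$ — one off-diagonal invertible block $A-\imath\omega E$ (invertible because $\imath\omega$ is not an eigenvalue, by strict passivity ⇒ asymptotic stability) gives inertia $\{n,n\}$ for that part, and Haynsworth inertia additivity then yields $\In(\A-\imath\omega\E)=\{n,n,0\}+\{m,0,0\}=\{n+m,n,0\}$.

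The main obstacle I anticipate is the bookkeeping in that final inertia count: one must carefully justify that, under strict passivity, the appropriate Schur complement is actually nonsingular for \emph{all} $\omega$ (including $\omega\to\infty$, which is where the index-one / semisimplicity of $\infty$ comes back in), rather than merely for generic $\omega$, and that the congruence used to separate off the $S$-block does not interfere with the inertia of the remainder. This is essentially the ``strict passivity $\Leftrightarrow$ strict positive realness + asymptotic stability'' equivalence quoted before Definition~\ref{def:ph}, repackaged as a statement about $\Phi(\imath\omega)=\T(\imath\omega)+\T(\imath\omega)^{\mathsf H}>0$ together with $\det(\imath\omega E-A)\ne 0$; making the Schur complement of \eqref{system} line up with $\Phi(\imath\omega)$ is the technical heart, and Haynsworth additivity does the rest.
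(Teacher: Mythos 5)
Your proposal is correct in substance and, on two of the three claims, takes a different (and in one case more detailed) route than the paper. For the symmetry of the finite spectrum, the paper forms the Schur complement of $s\E-\A$ with respect to the block $D^{\mathsf H}+D>0$ (invertible by $\mathcal W>0$) and invokes the eigenvalue symmetry of the resulting Hamiltonian matrix, whereas you read the symmetry $\lambda\mapsto-\bar\lambda$ directly off the even structure $\E=-\E^{\mathsf H}$, $\A=\A^{\mathsf H}$; both are standard, and yours avoids forming $E^{-1}$. The count and semisimplicity of the infinite eigenvalues and the inertia of $\imath\E$ are handled the same way in both. For the claim $\In(\A-\imath\omega\E)=\{n+m,n,0\}$ the paper is actually terser than you are: it only records that controllability and observability exclude purely imaginary eigenvalues (hence nonsingularity of $\A-\imath\omega\E$) and leaves the inertia count implicit, while you supply an explicit Haynsworth argument. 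That added detail is welcome, but note that of the two Schur-complement orders you describe, only the second one works as stated: eliminating the $D+D^{\mathsf H}$ block first leaves $\left[\begin{smallmatrix}0&M\\ M^{\mathsf H}&0\end{smallmatrix}\right]$ \emph{minus} a positive-semidefinite correction, and invertibility of $M=A-\imath\omega E$ alone does not pin the inertia of that corrected matrix at $\{n,n,0\}$ (a large semidefinite subtraction can push eigenvalues across zero). The clean version, which you correctly identify at the end as the technical heart, is to eliminate the leading $2n\times 2n$ block $\left[\begin{smallmatrix}0&M\\ M^{\mathsf H}&0\end{smallmatrix}\right]$ first --- legitimate since asymptotic stability makes $M$ invertible for every real $\omega$ --- whose inertia is $\{n,n,0\}$ and whose Schur complement is exactly $\Phi(\imath\omega)=\T(\imath\omega)+\T(\imath\omega)^{\mathsf H}>0$ by strict positive realness, after which Haynsworth additivity gives $\{n+m,n,0\}$. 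If you present only that second computation, your proof is complete and in fact fills in a step the paper glosses over.
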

\begin{proof}
Since $E>0$ and since $s\E-\A$ is regular, the pencil $s\E-\A$ has exactly $m$ infinite  eigenvalues. Since $\mathcal W>0$ by the assumption of strict passivity, it follows that $D^\mathsf{H}+D>0$ and hence the index is one and the finite eigenvalues of $s\E-\A$ are the eigenvalues of  the \emph{Hamiltonian matrix}
\[
 {\mathcal H} := \left[ \begin{array}{cc} E^{-1}A & 0 \\ 0 & -A^\mathsf{H}E^{-\mathsf{H}}
 \end{array}\right]- \left[ \begin{array}{cc} E^{-1}B \\ -C^\mathsf{H} \end{array} \right](D^\mathsf{H}+D)^{-1}\left[ \begin{array}{cc} C  & B^\mathsf{H}E^{-\mathsf{H}} \end{array} \right]
\]
 obtained by forming the Schur complement of $s\E-\A$ with respect to the block $D^\mathsf{H}+D>0$. It is well-known, see
 \cite{Meh91,PaiV81} that Hamiltonian matrices have a spectrum that is symmetric with respect to the imaginary axis.
 The inertia of the Hermitian matrix $(\imath\E)$ is clearly $\{n,n,m\}$, since $E$ is invertible. Since we have assumed controllability and observability, it is also well-known \cite{Meh91,Van81b} that $s\E-\A$ has no purely imaginary eigenvalues.
%
 %
 %
%
\end{proof}
A similar result as Lemma~\ref{lem:lem1} can also be obtained for the case that $E$ and/or $\mathcal W$ are only semidefinite. In this case one has to separate the differential and the algebraic equations and one has to make the stronger assumption that the pencil $s\E-\A$ is of index one. This can be achieved  via structured staircase forms, see e.g. \cite{BunBMN99} for general descriptor system and \cite{BeaGM20} for pH descriptor systems.
In the following we treat the case discussed in Lemma~\ref{lem:lem1}, \ie we assume that $E$ and $\mathcal W$ are positive definite so that we are not on the boundary of the set of passive systems.

When we perturb the pencil $s\E-\A$, it is clear that we cannot allow for arbitrary perturbations. The symmetry of the finite spectrum follows from the fact that $\A$ is Hermitian end $\E$ is skew Hermitian. We will therefore require that the perturbation preserves this, and hence that
the backward errors $\Delta_{\mathcal A}$ and $\Delta_{\mathcal E}$ are also
Hermitian and skew-Hermitian, respectively, \ie that $s\E-\A$ stays an even pencil when perturbed.

If we start to perturb a matrix, then its inertia remains constant in an open
neighborhood of the matrix only if it has no zero eigenvalues. Otherwise the inertia will change for arbitrarily small perturbations, unless we impose constraints on the type of perturbations that are allowed. Therefore we will need to impose that our perturbation  preserves the rank of the matrix $\E$.

When computing the eigenstructure of even pencils such as (\ref{system}) or  (\ref{systemeq}), then there exist algorithms that  guarantee these properties, see \cite{BenBMX02,BenLMV13} and the references therein. We will employ the even implicitly restarted Arnoldi method of \cite{MehSS12},  in which  $\A+\Delta_{\mathcal A}$ stays Hermitian, $\E+\Delta_{\mathcal E}$ stays skew-Hermitian, and the null-space of $\E+\Delta_{\mathcal E}$ is preserved.
When our perturbation results from an eigenvalue algorithm, we can therefore assume that the perturbation $s\Delta_{\mathcal E}-\Delta_{\mathcal A}$ of the pencil $s\E-\A$ satisfies
\begin{equation} \label{struct1}
\Delta_{\mathcal E}= \left[ \begin{array}{ccc} \Delta^{\mathcal E}_{11} & \Delta^{\mathcal E}_{12} & 0 \\ \Delta^{\mathcal E}_{21} & \Delta^{\mathcal E}_{22} & 0 \\ 0 & 0 & 0 \end{array} \right] = -\Delta_{\mathcal E}^\mathsf{H}, \quad \Delta_{\mathcal A}=
\left[ \begin{array}{ccc} \Delta^{\mathcal A}_{11} & \Delta^{\mathcal A}_{12} & \Delta^{\mathcal A}_{13} \\ \Delta^{\mathcal A}_{21} & \Delta^{\mathcal A}_{22} & \Delta^{\mathcal A}_{23} \\ \Delta^{\mathcal A}_{31} & \Delta^{\mathcal A}_{32} & \Delta^{\mathcal A}_{33} \end{array} \right] =  \Delta_{\mathcal A}^\mathsf{H}.
\end{equation}

If the perturbation arises from an approximation of the model (such as discretization or model reduction), then this approximation process needs to be done in such a way, that constraints (such as Kirchhoff's conditions in networks, or position constraints as in mechanical systems) that result from the physical properties of the system are not destroyed, see \cite{BeaGM19}. If this is done correctly then again the structure (\ref{strut1}) is typically preserved.

\subsection{Bounds on the structured backward errors}\label{sec:errorbounds}

If we use a backward stable algorithm structure preserving algorithm from \cite{MehSS12} to compute the eigenstructure, then $\| (\Delta_{\mathcal E},\Delta_{\mathcal A})\|_F \approx \epsilon \|({\mathcal E},{\mathcal A})\|_F$ where $\epsilon$ is a small multiple of the machine precision (unit round-off), and   $\Delta_{\mathcal A}$ and $\Delta_{\mathcal E}$ have the structure indicated in \eqref{struct1}.  To see whether the computed eigenstructure is that associated with a pH descriptor system and to compute the backward error, we need  to find a transformation that preserves the computed eigenstructure, preserves the structure
indicated in \eqref{struct1},  annihilates the diagonal blocks
$s\Delta^{\mathcal E}_{11}-\Delta^{\mathcal A}_{11}$ and $s\Delta^{\mathcal E}_{22}-\Delta^{\mathcal A}_{22}$, and also restores the property  $E+\Delta^{\mathcal E}_{12}=(E+\Delta^{\mathcal E}_{12})^\mathsf{H}> 0$.

To preserve the computed eigenstructure and  the Hermitian character of
$\Delta_{\mathcal A}$, we will perform a congruence transformation;
and in order to preserve the structure of $\Delta_\E$ indicated in \eqref{struct1},
we will constrain it to be block lower triangular, \ie we look for a transformation
\[
  Z := \left[ \begin{array}{ccc} Z_{11} & Z_{12} & 0 \\ Z_{21} & Z_{22} & 0 \\
Z_{31} & Z_{32} & Z_{33} \end{array} \right]
\]
such that
\[
 Z^\mathsf{H}\left(\A+\Delta_{\mathcal A}\right)Z=
\left[ \begin{array}{ccc} 0 & A+\Delta_A & B+\Delta_B \\ A^{\mathsf{H}}+\Delta_A^{\mathsf{H}} & 0 & C^\mathsf{H}+\Delta_C^{\mathsf{H}} \\ B^\mathsf{H}+\Delta_B^{\mathsf{H}} & C+\Delta_C & D^\mathsf{H}+\Delta_D^{\mathsf{H}}+D+\Delta_D \end{array} \right],
\]
and
\[
Z^\mathsf{H}\left(\E+\Delta_{\mathcal E}\right)Z=
\left[ \begin{array}{ccc} 0 & E+\Delta_E & 0 \\ -E^{\mathsf{H}}-\Delta_E^{\mathsf{H}} & 0 & 0 \\
0 & 0 & 0 \end{array} \right],
\]
with $(E+\Delta_E)^{\mathsf{H}}=E+\Delta_E > 0$.
We also require that $Z$ is as close as possible to the identity matrix, such that
$\{\Delta_E,\Delta_A,\Delta_B,\Delta_C,\Delta_D\}$ remain as small as possible.
This suggests that we choose $Z_{31}=Z_{32}=0$ and $Z_{33}=I_m$ and
look for a submatrix of $Z$
\[
 \left[ \begin{array}{ccc} Z_{11} & Z_{12} \\ Z_{21} & Z_{22}  \end{array} \right] := I_{2n} + Y = I_{2n} + \left[ \begin{array}{ccc} Y_{11} & Y_{12} \\ Y_{21} & Y_{22}  \end{array} \right]
\]
near the identity matrix, and satisfying the matrix equations
\begin{eqnarray} \label{A}
(I+Y^\mathsf{H})\left[ \begin{array}{ccc} \Delta^{\mathcal A}_{11} & A+\Delta^{\mathcal A}_{12} \\ A^{\mathsf{H}}+\Delta^{\mathcal A}_{21} & \Delta^{\mathcal A}_{22} \end{array} \right] (I+Y) &= & \left[ \begin{array}{ccc} 0 & A+\Delta_A \\ A^{\mathsf{H}}+\Delta_A^{\mathsf{H}} & 0 \end{array} \right],
\\ \label{E}
(I+Y^\mathsf{H})\left[ \begin{array}{ccc} \Delta^{\mathcal E}_{11} & E+\Delta^{\mathcal E}_{12} \\-E^{\mathsf{H}}+\Delta^{\mathcal E}_{21} & \Delta^{\mathcal E}_{22} \end{array} \right] (I+Y)  & = & \left[ \begin{array}{ccc} 0 & E+\Delta_E \\ -E^{\mathsf{H}}-\Delta_E^{\mathsf{H}} & 0 \end{array} \right].
\end{eqnarray}
Removing common terms on both sides and using the notation $A_\Delta:=A+\Delta^\A_{12}$ and $E_\Delta:=E+\Delta^\E_{12}$, we can rewrite these equations as
\[
 Y^\mathsf{H}\left[ \begin{array}{cc} \Delta^{\mathcal A}_{11} & A_\Delta \\ A_\Delta^{\mathsf{H}} & \Delta^{\mathcal A}_{22}  \end{array} \right] +\left[ \begin{array}{cc} \Delta^{\mathcal A}_{11} & A_\Delta \\ A_\Delta^{\mathsf{H}} & \Delta^{\mathcal A}_{22}  \end{array} \right]Y +\left[ \begin{array}{ccc} \Delta^{\mathcal A}_{11} & \Delta^{\mathcal A}_{12} \\ \Delta^{\mathcal A}_{21} & \Delta^{\mathcal A}_{22} \end{array} \right] =  \left[ \begin{array}{ccc} 0 & \Delta_A \\ \Delta_A^{\mathsf{H}} & 0 \end{array} \right]-Y^\mathsf{H} \left[ \begin{array}{ccc} \Delta^{\mathcal A}_{11} & A_\Delta  \\ A_\Delta^\mathsf{H} & \Delta^{\mathcal A}_{22} \end{array} \right] Y,
\]
and
\[
Y^\mathsf{H}\left[ \begin{array}{cc} \Delta^{\mathcal E}_{11} & E_\Delta \\ -E_\Delta^{\mathsf{H}} & \Delta^{\mathcal E}_{22}  \end{array} \right] +\left[ \begin{array}{cc} \Delta^{\mathcal E}_{11} & E_\Delta \\ -E_\Delta^{\mathsf{H}} & \Delta^{\mathcal E}_{22}  \end{array} \right]Y +\left[ \begin{array}{ccc} \Delta^{\mathcal E}_{11} & \Delta^{\mathcal E}_{12} \\ \Delta^{\mathcal E}_{21} & \Delta^{\mathcal E}_{22} \end{array} \right] =  \left[ \begin{array}{ccc} 0 & \Delta_E \\ -\Delta_E^{\mathsf{H}} & 0 \end{array} \right]
-Y^\mathsf{H}\left[ \begin{array}{ccc} \Delta^{\mathcal E}_{11} & E_\Delta \\ -E_\Delta^\mathsf{H} & \Delta^{\mathcal E}_{22} \end{array} \right]Y,
\]
in which we need to zero out the diagonal blocks. Considering these equations, it seems  reasonable to choose $Y_{11}=Y_{22}=0$ and then solve the remaining quadratic equations
\begin{eqnarray}
\label{A1}  A_\Delta Y_{21} + Y_{21}^\mathsf{H}A_\Delta^\mathsf{H}  & = & - \Delta^\A_{11}-Y_{21}^\mathsf{H}\Delta^\A_{22}Y_{21}, \label{10}\\
\label{E1}  E_\Delta Y_{21} - Y_{21}^\mathsf{H} E_\Delta^\mathsf{H}  & = & - \Delta^\E_{11}-Y_{21}^\mathsf{H}\Delta^\E_{22}Y_{21}, \label{11}\\
\label{A2}  A_\Delta^\mathsf{H} Y_{12} + Y_{12}^\mathsf{H} A_\Delta  & = & -\Delta^\A_{22}-Y_{12}^\mathsf{H}\Delta^\A_{11}Y_{12}, \label{12}\\
\label{E2}  - E_\Delta^\mathsf{H} Y_{12} + Y_{12}^\mathsf{H} E_\Delta  & = & -\Delta^\E_{22}-Y_{12}^\mathsf{H}\Delta^\E_{11}Y_{12}\label{13}
\end{eqnarray}
for the unknowns $Y_{12}$ and $Y_{21}$.
If we decompose $Y_{12}$ and $Y_{21}$ in their Hermitian and skew Hermitian parts,
$Y_{12}=W_{12}+V_{12}$, and $Y_{21}=W_{21}+V_{21}$, with
$ W_{12}= W_{12}^\mathsf{H}$,  $W_{21}= W_{21}^\mathsf{H}$,  $V_{12}=-V_{12}^\mathsf{H}$, and $V_{21}=-V_{21}^\mathsf{H}$,
then, using the $\vect$ function which stacks the columns of a matrix in a large vector, we have
\begin{eqnarray*} \label{vec12} \vect(Y_{12})&=&\vect(W_{12})+\vect(V_{12}),  \quad \vect(Y^\mathsf{H}_{12})=\vect(W_{12})-\vect(V_{12}),\\ \label{vec21} \vect(Y_{21})&=&\vect(W_{21})+\vect(V_{21}),  \quad \vect(Y^\mathsf{H}_{21})=\vect(W_{21})-\vect(V_{21}).
\end{eqnarray*}
We can then rewrite the equations (\ref{10})--(\ref{13}) using Kronecker products  as
\begin{eqnarray} \label{L1}
	\left[\begin{array}{cr}
	I_n\otimes E_\Delta & 	- \overline E_\Delta\otimes I_n \\ I_n\otimes A_\Delta & \overline A_\Delta\otimes I_n
\end{array} \right]
\left[\begin{array}{rr}
	\vect(W_{21}) + \vect(V_{21}) \\ \vect(W_{21})  - \vect(V_{21})
\end{array} \right] =  \left[\begin{array}{r}
	- \vect(\Delta^\E_{11}) \\ - \vect(\Delta^\A_{11})
\end{array} \right]  + {\mathcal O}(\|Y\|^2), \\  \label{L2}
	\left[\begin{array}{cr}
	-I_n\otimes E_\Delta^\mathsf{H} & E_\Delta^\mathsf{T}\otimes I_n \\ I_n\otimes A_\Delta^\mathsf{H} & A_\Delta^\mathsf{T}\otimes I_n
\end{array} \right]
\left[\begin{array}{rr}
	\vect(W_{12}) + \vect(V_{12}) \\ \vect(W_{12}) - \vect(V_{12})
\end{array} \right] = \left[\begin{array}{r}
 -\vect(\Delta^\E_{22})  \\  -\vect(\Delta^\A_{22})
\end{array} \right]+ {\mathcal O}(\|Y\|^2).
\end{eqnarray}
If we ignore the quadratic terms on the right hand side, then we obtain linear systems that are solvable when the pencils $sE_\Delta-A_\Delta$ and $-sE_\Delta^\mathsf{H}-A_\Delta^\mathsf{H}$ have no common eigenvalues, see \eg \cite{LanT85}, which is the case when these pencils come from a sufficiently small perturbation of a system $\T(s)$ which is strictly passive. We have the following result.
\begin{lemma} \label{lembound}
Consider the linear systems \eqref{L1}--\eqref{L2} with the quadratic terms set to $0$, set
\begin{equation} \label{K1K2}
K_1(E,A):= \left[\begin{array}{cr}
I_n\otimes E & 	- \overline E\otimes I_n \\ I_n\otimes A & \overline A\otimes I_n
\end{array} \right], \quad K_2(E,A):=	\left[\begin{array}{cr}
-I_n\otimes E^\mathsf{H} & E^\mathsf{T}\otimes I_n \\ I_n\otimes A^\mathsf{H} & A^\mathsf{T}\otimes I_n
\end{array} \right],
\end{equation}
and let
\[
\hat\delta:=\max(\|\Delta_{12}^\A\|_2,\|\Delta_{12}^\E\|_2) < \frac12\min\{\sigma_{2n^2}(K_1(E,A)),\sigma_{2n^2}(K_2(E,A))\},
\]
where $\sigma_j(M)$ denotes the $j$th singular value of the matrix $M$. Then the solution $(Y_{21}, Y_{12})$ satisfies the bound
\begin{equation} \label{linbound} \sqrt{2} \| ( Y_{21}, Y_{12} ) \|_F \le
\frac{\| ( \Delta^\E_{11},\Delta^\A_{11},\Delta^\E_{22},\Delta^\A_{22} ) \|_F}{\min\{\sigma_{2n^2}(K_1(E,A)),\sigma_{2n^2}(K_2(E,A))\}-2\hat\delta}.
\end{equation}
\end{lemma}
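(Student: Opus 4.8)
The plan is to read \eqref{L1} and \eqref{L2} as two uncoupled square linear systems of size $2n^2$, to control the norm of the inverse of each coefficient matrix by comparison with the unperturbed Kronecker matrices $K_1(E,A)$ and $K_2(E,A)$ of \eqref{K1K2}, and then to reassemble the two resulting estimates into the single inequality \eqref{linbound}.

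First I would note that the maps $(E,A)\mapsto K_1(E,A)$ and $(E,A)\mapsto K_2(E,A)$ are \emph{linear}, since each block of $K_i$ is, up to sign, one of $I_n\otimes E$, $\overline E\otimes I_n$, $I_n\otimes A$, $\overline A\otimes I_n$. Hence, with $E_\Delta=E+\Delta^\E_{12}$ and $A_\Delta=A+\Delta^\A_{12}$, the coefficient matrix occurring in \eqref{L1} is $K_1(E_\Delta,A_\Delta)=K_1(E,A)+K_1(\Delta^\E_{12},\Delta^\A_{12})$, and likewise for \eqref{L2} with $K_2$. Because $\|I_n\otimes M\|_2=\|M\otimes I_n\|_2=\|M\|_2$, the four blocks of the correction $K_1(\Delta^\E_{12},\Delta^\A_{12})$ all have $2$-norm at most $\hat\delta$, and since a $2\times 2$ block matrix whose blocks all have $2$-norm at most $c$ has itself $2$-norm at most $2c$, we get $\|K_1(\Delta^\E_{12},\Delta^\A_{12})\|_2\le 2\hat\delta$, and similarly $\|K_2(\Delta^\E_{12},\Delta^\A_{12})\|_2\le 2\hat\delta$. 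By Weyl's perturbation inequality for singular values this yields $\sigma_{2n^2}(K_i(E_\Delta,A_\Delta))\ge \sigma_{2n^2}(K_i(E,A))-2\hat\delta$ for $i=1,2$, which is strictly positive under the hypothesis on $\hat\delta$; hence each perturbed coefficient matrix is invertible with $\|K_i(E_\Delta,A_\Delta)^{-1}\|_2\le 1/(\sigma_{2n^2}(K_i(E,A))-2\hat\delta)$.

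Next I would extract the estimate for $Y_{21}$ from \eqref{L1} with the quadratic terms dropped. The right-hand side of that system has Euclidean norm $\sqrt{\|\Delta^\E_{11}\|_F^2+\|\Delta^\A_{11}\|_F^2}=\|(\Delta^\E_{11},\Delta^\A_{11})\|_F$, while the stacked unknown vector, whose two blocks are $\vect(W_{21})\pm\vect(V_{21})$, has squared Euclidean norm $2(\|W_{21}\|_F^2+\|V_{21}\|_F^2)$. Writing $Y_{21}=W_{21}+V_{21}$ with $W_{21}$ Hermitian and $V_{21}$ skew-Hermitian, these two parts are Frobenius-orthogonal, because $\tr(W_{21}^{\mathsf{H}}V_{21})+\tr(V_{21}^{\mathsf{H}}W_{21})=\tr(W_{21}V_{21})-\tr(V_{21}W_{21})=0$, so that $\|W_{21}\|_F^2+\|V_{21}\|_F^2=\|Y_{21}\|_F^2$. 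Combining this with the bound on $\|K_1(E_\Delta,A_\Delta)^{-1}\|_2$ gives $\sqrt{2}\,\|Y_{21}\|_F\le \|(\Delta^\E_{11},\Delta^\A_{11})\|_F/(\sigma_{2n^2}(K_1(E,A))-2\hat\delta)$, and the symmetric argument applied to \eqref{L2} gives $\sqrt{2}\,\|Y_{12}\|_F\le \|(\Delta^\E_{22},\Delta^\A_{22})\|_F/(\sigma_{2n^2}(K_2(E,A))-2\hat\delta)$.

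Finally I would replace each denominator by the common, smaller quantity $\mu:=\min\{\sigma_{2n^2}(K_1(E,A)),\sigma_{2n^2}(K_2(E,A))\}-2\hat\delta>0$, square both inequalities, add them, and take a square root, using $\|(Y_{21},Y_{12})\|_F^2=\|Y_{21}\|_F^2+\|Y_{12}\|_F^2$ together with $\|(\Delta^\E_{11},\Delta^\A_{11})\|_F^2+\|(\Delta^\E_{22},\Delta^\A_{22})\|_F^2=\|(\Delta^\E_{11},\Delta^\A_{11},\Delta^\E_{22},\Delta^\A_{22})\|_F^2$; the outcome is precisely \eqref{linbound}. The two points that need a little care are the passage from the coefficient matrices $K_i(E_\Delta,A_\Delta)$ actually appearing in \eqref{L1}--\eqref{L2} to the matrices $K_i(E,A)$ in the statement, which is exactly where linearity of $K_i$ and Weyl's inequality are used, and the bookkeeping of the factor $\sqrt{2}$, which rests on the Frobenius-orthogonality of the Hermitian and skew-Hermitian parts of $Y_{12}$ and $Y_{21}$; everything else is routine.
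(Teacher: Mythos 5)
Your argument is correct and follows essentially the same route as the paper's proof: invert the perturbed Kronecker coefficient matrices $K_i(E_\Delta,A_\Delta)$, bound their smallest singular values from below by $\sigma_{2n^2}(K_i(E,A))-2\hat\delta$, and account for the factor $\sqrt{2}$ via the relation between $(\vect(W),\vect(V))$ and $(\vect(Y),\vect(Y^{\mathsf H}))$ (the paper does this through the unitary matrix $X$, you through the Frobenius-orthogonality of the Hermitian and skew-Hermitian parts, which is the same computation). The only difference is that you spell out the $2\hat\delta$ singular-value perturbation bound, which the paper simply cites as standard perturbation theory.
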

\begin{proof}
Define $\hat K_i:= K_i(E_\Delta,A_\Delta)= K_i(E+\Delta_{12}^\E,A+\Delta_{12}^\A)$ for $i=1,2$, then it follows from standard perturbation theory, see \eg  \cite{Hig02}, that
\[
 \sigma_{2n^2}(K_1(E_\Delta,A_\Delta))\ge \sigma_{2n^2}(K_1(E,A))-2\hat\delta ,  \quad \sigma_{2n^2}(K_2(E_\Delta,A_\Delta)) \ge \sigma_{2n^2}(K_2(E,A))-2\hat \delta .
\]
The bound \eqref{linbound} then follows from the solutions of the linear systems  \eqref{L1}--\eqref{L2}, which can be written as
%
\begin{equation} \label{consistent}
\begin{array}{c}
\sqrt{2} X \left[ \begin{array}{c} \vect(W_{21})  \\ \vect(V_{21})  \end{array} \right] =
 \left[\begin{array}{r} \vect(Y_{21})  \\ \vect(Y^\mathsf{H}_{21})  \end{array} \right]
 = -  \hat K_1^{-1}  \left[ \begin{array}{r} \vect(\Delta^\E_{11}) \\ \vect(\Delta^\A_{11}) \end{array} \right] , \\ [+5mm]
\sqrt{2} X \left[ \begin{array}{c} \vect(W_{12})  \\ \vect(V_{12})  \end{array} \right]  = \left[\begin{array}{r}
	\vect(Y_{12})  \\ \vect(Y^\mathsf{H}_{12})  \end{array} \right] = -  \hat K_2^{-1}
 \left[\begin{array}{r} \vect(\Delta^\E_{22}) \\ \vect(\Delta^\A_{22})  \end{array} \right] ,  \end{array}
\end{equation}
and the fact that $\|\vect(M)\|_2=\|M\|_F$ for any matrix $M$.
\end{proof}
An  estimate of the smallest singular values $\sigma_{2n^2}(K_1(E,A))$ and $\sigma_{2n^2}(K_2(E,A))$ is obtained from considering the triple $(\sigma_n,u_n,v_n)$ in \eqref{stabpert} which yields
\begin{eqnarray*}
\label{BK1}
	\left[\begin{array}{cc} \imath\omega(u_n^\mathsf{T}\otimes u_n^\mathsf{H})
		& -(u_n^\mathsf{T}\otimes u_n^\mathsf{H}) \end{array}  \right]
	\left[\begin{array}{cr}
		I_n\otimes E & 	- \overline E\otimes I_n \\ I_n\otimes A & \overline A\otimes I_n
	\end{array} \right] & = & \sigma_n	\left[\begin{array}{cc} (u_n^\mathsf{T}\otimes v_n^\mathsf{H})
		& (v_n^\mathsf{T}\otimes u_n^\mathsf{H}) \end{array}  \right], \\
\label{BK2}
		\left[\begin{array}{cc} \imath\omega(v_n^\mathsf{T}\otimes v_n^\mathsf{H})
		& -(v_n^\mathsf{T}\otimes v_n^\mathsf{H}) \end{array}  \right]
	\left[\begin{array}{cr}
		- I_n\otimes E^\mathsf{H} &  E^\mathsf{T}\otimes I_n \\ I_n\otimes A^\mathsf{H} & A^\mathsf{T}\otimes I_n
	\end{array} \right] & = & \sigma_n	\left[\begin{array}{cc} (v_n^\mathsf{T}\otimes u_n^\mathsf{H})
		& (u_n^\mathsf{T}\otimes v_n^\mathsf{H}) \end{array}  \right].
\end{eqnarray*}
It follows from these identities that the smallest singular value of the unperturbed
block Kronecker products must be smaller or equal to $\sqrt{2}\sigma_n/\sqrt{1+\omega^2} = \sqrt{2}\rho(E,A)$.
Defining the  smallest structured singular value of a structured matrix  as the smallest structured perturbation that makes it singular, one can expect that this is a very good estimate, since the smallest structured singular value equals the stability radius $\rho(E,A)$. The quality of this estimate is illustrated via numerical examples in Section~\ref{sec:numerical}.

\subsection{An iteration solution procedure}\label{sec:iterative}
The solution of the quadratic equations (\ref{A1})--(\ref{E1}) in $(Y_{21},Y_{21}^\mathsf{H})$ and (\ref{A2})--(\ref{E2}) in $(Y_{12},Y_{12}^\mathsf{H})$,
can be obtained using the  iterative schemes~
\[
 \begin{array}{c}
A_\Delta [Y_{21}]_{i+1} + [Y_{21}^\mathsf{H}]_{i+1} A_\Delta^\mathsf{H}   =  - \Delta^\A_{11}-[Y_{21}^\mathsf{H}]_i\Delta^\A_{22}[Y_{21}]_i, \\
E_\Delta [Y_{21}]_{i+1} - [Y_{21}^\mathsf{H}]_{i+1} E_\Delta^\mathsf{H}   = - \Delta^\E_{11}-[Y_{21}^\mathsf{H}]_i\Delta^\E_{22}[Y_{21}]_i, \end{array}
\]
and
\[
\begin{array}{c}
A_\Delta^\mathsf{H} [Y_{12}]_{i+1} + [Y_{12}^\mathsf{H}]_{i+1} A_\Delta   =  -\Delta^\A_{22}-[Y_{12}^\mathsf{H}]_i\Delta^\A_{11}[Y_{12}]_i, \\
- E_\Delta^\mathsf{H} [Y_{12}]_{i+1} + [Y_{12}^\mathsf{H}]_{i+1} E_\Delta  =  -\Delta^\E_{22}-[Y_{12}^\mathsf{H}]_i\Delta^\E_{11}[Y_{12}]_i.\end{array}
\]
Using an analysis similar to that of \cite{SteS90}, we can show that these iterations converge to a solution of the quadratic equations \eqref{A1}, \eqref{E1}, \eqref{A2}, and \eqref{E2}, see \cite[Theorem 2.11, p. 242]{SteS90} and \cite{DopLPV18}. We obtain the following main result.
\begin{theorem}\label{thm:gen_sylvester_solution}
Consider the system of matrix equations  (\ref{A1}), (\ref{E1}), (\ref{A2}), (\ref{E2}).
Let
\begin{eqnarray*}
 \delta &:=& \min\{ \sigma_{2n^2} (K_1(E_\Delta,A_\Delta)),\sigma_{2n^2} (K_2(E_\Delta,A_\Delta)) \} -2 \max\{ \|\Delta_{12}^\A\|_2,\|\Delta_{12}^\E\|_2\},\\
 \theta &:=& \|(\Delta^\E_{11},\Delta^\E_{22},\Delta^\A_{11},\Delta^\A_{22})\|_F,\\
 \omega &:=& \sqrt{2}
\left\| ( \Delta^{\mathcal A}_{11} ,\Delta^{\mathcal A}_{22}, \Delta^{\mathcal E}_{11}, \Delta^{\mathcal E}_{22} )  \right\|_F.
\end{eqnarray*}
If  $\delta>0$ and $ \frac{\theta\omega}{\delta^2}<\frac{1}{4}\>$, then there exists a solution $(Y_{12},Y_{21})$ of these equations satisfying
\begin{equation}\label{eq:norm_solution}
  \|(Y_{12},Y_{21})\|_F\leq 2\theta/\delta.
  \end{equation}
\end{theorem}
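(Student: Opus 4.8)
The plan is to read the system (\ref{A1})--(\ref{E2}) as two \emph{decoupled} quadratic matrix equations — equations (\ref{A1})--(\ref{E1}) for the single unknown $Y_{21}$ and (\ref{A2})--(\ref{E2}) for $Y_{12}$ — and to solve each by the fixed-point iteration displayed just before the statement, started from the zero matrix, controlled by a contraction (equivalently, majorant-sequence) argument in the style of \cite[Theorem~2.11, p.~242]{SteS90}. First I would rewrite each problem in the abstract form $\mathcal L_i(Y)=C_i+\mathcal Q_i(Y)$: after vectorizing the relevant pair of equations and using the unitary ``$X$-trick'' already exploited in \eqref{consistent}, $\mathcal L_i$ is the linear operator whose matrix on the stacked variable $(\vect Y,\vect Y^{\mathsf H})$ is $K_i(E_\Delta,A_\Delta)$, the constant $C_i$ collects the relevant pair of diagonal blocks $(\Delta^\E_{jj},\Delta^\A_{jj})$, and $\mathcal Q_i$ is the quadratic map $Y\mapsto(Y^{\mathsf H}\Delta^\E_{jj}Y,\ Y^{\mathsf H}\Delta^\A_{jj}Y)$ built from the other pair. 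A point worth recording at the outset is that, because the $\Delta^\E_{jj}$ are skew-Hermitian and the $\Delta^\A_{jj}$ are Hermitian, $\mathcal Q_i(Y)$ again lies in the (skew-Hermitian, Hermitian) subspace, so $\mathcal L_i^{-1}$ may be applied to $C_i+\mathcal Q_i(Y)$ consistently, returning the next iterate as a genuine $n\times n$ matrix; in particular the recursion is well posed.

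Next I would bound the inverse operators. By the definition of $\delta$, $\sigma_{2n^2}(K_i(E_\Delta,A_\Delta))\ge\delta+2\max\{\|\Delta^\A_{12}\|_2,\|\Delta^\E_{12}\|_2\}\ge\delta>0$, so each $K_i(E_\Delta,A_\Delta)$ is invertible — hence the displayed iterations are well defined — and, using $\sqrt2\,\|Y\|_F=\|(\vect Y,\vect Y^{\mathsf H})\|_2$, one gets $\|\mathcal L_i^{-1}\|\le1/(\sqrt2\,\delta)$ in the Frobenius norm. Combining this with $\|Y^{\mathsf H}\Delta Y\|_F\le\|\Delta\|_2\|Y\|_F^2$, the triangle inequality, $\sqrt{\|\Delta^\E_{jj}\|_2^2+\|\Delta^\A_{jj}\|_2^2}\le\theta$ and $\|C_i\|\le\theta$, the iteration yields, for the norms $p_k:=\|[Y_{21}]_k\|_F$ and $q_k:=\|[Y_{12}]_k\|_F$, the scalar bounds $p_{k+1}\le c(1+p_k^2)$ and $q_{k+1}\le c(1+q_k^2)$ with $c=\theta/(\sqrt2\,\delta)$; here $4c^2=2\theta^2/\delta^2=\sqrt2\,(\theta\omega/\delta^2)$, which is how the product $\theta\omega/\delta^2$ of the hypothesis enters once $\omega=\sqrt2\,\theta$ is used.

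The rest is routine. The iteration map sends the closed ball $\{\,\|(Y_{21},Y_{12})\|_F\le2\theta/\delta\,\}$ into itself — by the displayed bounds this amounts to $\theta^2/\delta^2\le\tfrac14$, which is implied by $\theta\omega/\delta^2<\tfrac14$ — and on that ball it is a contraction: using the identity $Y^{\mathsf H}\Delta Y-\tilde Y^{\mathsf H}\Delta\tilde Y=Y^{\mathsf H}\Delta(Y-\tilde Y)+(Y-\tilde Y)^{\mathsf H}\Delta\tilde Y$ one finds its Lipschitz constant there is at most $2\theta\omega/\delta^2<\tfrac12$. By the Banach fixed-point theorem it then has a unique fixed point in the ball, which solves (\ref{A1})--(\ref{E2}) and satisfies $\|(Y_{12},Y_{21})\|_F\le2\theta/\delta$, \ie\eqref{eq:norm_solution}. (Equivalently, the majorant-sequence analysis of \cite{SteS90} applied to $g(y):=c(1+y^2)$ shows the iterates stay below the smaller root $y_\star=(1-\sqrt{1-4c^2})/(2c)\le2c=\sqrt2\,\theta/\delta$ and converge, giving the same — in fact slightly sharper — bound.)

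I expect the only real difficulty to be the careful accounting of the several $\sqrt2$ factors — those from the stacking $(\vect Y,\vect Y^{\mathsf H})$, from splitting $Y_{ij}$ into Hermitian and skew-Hermitian parts, and from recombining the $Y_{21}$ and $Y_{12}$ subproblems — so that the majorant $g$ and the contraction factor come out with exactly the constants that turn $\theta\omega/\delta^2<\tfrac14$ into the bound $2\theta/\delta$; conceptually the argument is just the standard contraction/majorant scheme for quadratic matrix equations. A secondary, routine point is to keep verifying that the quadratic terms stay in the structured subspace on which $\mathcal L_i^{-1}$ acts, which was already arranged in the setup.
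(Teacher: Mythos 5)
Your proposal is correct and follows essentially the same route as the paper: the same fixed-point iteration, the same use of Lemma~\ref{lembound} to bound $\hat K_i^{-1}$ by $1/\delta$, a bound of the quadratic term by $\theta\|Y\|_F^2$, and the same bookkeeping that turns $\theta\omega/\delta^2<\tfrac14$ (via $\omega=\sqrt2\,\theta$) into ball invariance and the bound $2\theta/\delta$. The only difference is presentational: you invoke the Banach fixed-point theorem on an invariant ball (making the convergence self-contained), whereas the paper runs the majorant recursion $\kappa_{i+1}=\kappa_1(1+\kappa_i)^2$ and cites \cite{DopLPV18} for the monotonicity and Cauchy-sequence arguments.
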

\begin{proof}
Lemma \ref{lembound} and the assumption $\delta >0$ guarantee that the linear system of matrix equations \eqref{consistent}
is solvable. If we write its solution $([Y_{21}]_1,[Y_{12}]_1)$ in terms of the representation with the matrices $(W_{21},V_{21})$ for
$Y_{21}$ and with $(W_{12},V_{12})$ for $Y_{12}$, then we obtain the bound
\[
 \|([W_{21}]_1,[V_{21}]_1,[W_{12}]_1,[V_{12}]_1)\|_F\leq \frac{\|(\Delta^\E_{11},\Delta^\E_{22},\Delta^\A_{11},\Delta^\A_{22})\|_F}{\delta} = \frac{\theta}{\delta}=:\rho_0,
\]
using Lemma \ref{lembound}.
 The iterative schemes can then be written as
 \begin{eqnarray} \nonumber
 \left[ \begin{array}{c} \vect([W_{21}]_{i+1})  \\ \vect([V_{21}]_{i+1})  \end{array} \right] &=&
 \left[ \begin{array}{c} \vect([W_{21}]_0)  \\ \vect([V_{21}]_0)  \end{array} \right] \\
 \label{it1}  && + (\hat K_1\sqrt{2} X)^{-1}
 \left[ \begin{array}{c} \vect([W_{21}-V_{21}]_i \Delta^\E_{11}[W_{21}+V_{21}]_i)  \\
 \vect([W_{21}-V_{21}]_i \Delta^\A_{11}[W_{21}+V_{21}]_i)   \end{array} \right],\\
\nonumber
 \left[ \begin{array}{c} \vect([W_{12}]_{i+1})  \\ \vect([V_{12}]_{i+1})  \end{array} \right] &=&
 \left[ \begin{array}{c} \vect([W_{12}]_0)  \\ \vect([V_{12}]_0)  \end{array} \right] \\
  \label{it2} &&+ (\hat K_2\sqrt{2} X)^{-1}
 \left[ \begin{array}{c} \vect([W_{12}-V_{12}]_i \Delta^\E_{22}[W_{12}+V_{12}]_i)  \\
 \vect([W_{12}-V_{12}]_i \Delta^\A_{22}[W_{12}+V_{12}]_i)   \end{array} \right].
\end{eqnarray}
We now show that the sequences $\{[W_{12},V_{12}]_i)\}_{i=0}^\infty$ and $\{[W_{21},V_{21}]_i)\}_{i=0}^\infty$ converge to a solution of  (\ref{A1}), (\ref{E1}),  (\ref{A2}), (\ref{E2}) satisfying \eqref{eq:norm_solution}.
To prove this, we first show that these sequences are bounded. The proofs for $\{[W_{12},V_{12}]_i)\}_{i=0}^\infty$ and $\{[W_{21},V_{21}]_i)\}_{i=0}^\infty$ are identical, so we only prove it for one sequence and we drop the indices of $W$, $V$, $K$,
$\Delta^\E$ and $\Delta^\A$,  in order to simplify the notation.
If $\|(W_{i},V_{i})\|_F \leq \rho_{i}$, then from (\ref{it1}) and (\ref{it2}) we have that
  \begin{align*}
    \|(W_{i+1},V_{i+1})\|_F\nonumber \leq  &\|(W_0,V_0)\|_F +\sqrt{2}\|\hat K^{-1} \|_2\|(W_{i},V_{i})\|_F^{2}\|(\Delta^\E,\Delta^\A)\|_F\nonumber\\
    \leq &\rho_0+\rho_{i}^2\omega\delta^{-1}=:\rho_{i+1}\>.
    \label{eq:recurrecne_rho_i_1}
  \end{align*}
We may write the quantity $\rho_{i}$ in this equation as $\rho_{i}=\rho_0(1+\kappa_i)$, where $\kappa_i$ satisfies the recursion
\begin{equation} \label{eq:fixedstewart}
  \left\{ \begin{array}{l}
    \kappa_1=\rho_0\omega\delta^{-1}=\theta \omega \delta^{-2},\\
    \kappa_{i+1}=\kappa_1(1+\kappa_i)^2\>.
    \end{array}\right.
\end{equation}
An induction argument used in \cite{DopLPV18} then shows that $0 < \kappa_1 < \kappa_2 < \cdots$, \ie that the sequence is strictly increasing and that, if $\kappa_1<1/4$, then
\[
\kappa =\lim_{i\rightarrow \infty} \kappa_i = \frac{2\kappa_1}{1-2\kappa_1+\sqrt{1-4\kappa_1}} < 1,
  \]
and $\kappa_i <\kappa$ for all $i\geq 1$.
Thus, the norms of the elements of the sequence $\{(W_i,V_i)\}_{i=0}^\infty$ are bounded as
  \begin{equation} \label{eq:bound_CD2}
    \|(W_{i},V_{i})\|_F\leq \rho :=\lim_{i\rightarrow \infty}{\rho_i}=\rho_0(1+\kappa)\>.
  \end{equation}
It is shown in \cite{DopLPV18} that the sequence $\{ (W_i,V_i)\}_{i=0}^\infty$ is a Cauchy sequence and therefore converges, provided that $2\delta^{-1}\omega \rho<1$, which is ensured by \eqref{eq:norm_solution}.
Finally, from \eqref{eq:bound_CD2}, $\|(W,V)\|_F \leq \rho_0(1+\kappa) < 2\rho_0 = 2\delta^{-1}\theta$, which concludes the proof.
\end{proof}

Once the zero blocks have been restored, we still need to restore the property that
$E$ was Hermitian and positive definite. This can be incorporated in the pencil via an additional congruence transformation
$Z=\diag(I_n, Z_{22},I_m)$, where $Z_{22}$ is the polar factor of the perturbed matrix $E+\Delta_E$.  It was shown in \cite{Sun89} that the polar factor of a perturbed positive definite Hermitian matrix $E+\Delta_E$
is near the identity matrix and if expressed as $  Z_{22}= I+Y_{22}$ satisfied the bound
\[
 \|Y_{22}\|_F \le 2 \|E^{-1}\|_2 \|\Delta_E\|_F.
\]
We can thus restore also the positive definite symmetry of the matrix $E$ at the cost of a growth factor $2 \|E^{-1}\|_2$ in the blocks $E_\Delta$, $A_\Delta$ and $C_\Delta$, since the congruence transformation yields a right multiplication of these matrices by $I_n+Y_{22}$. It is worth pointing out that $\|E^{-1}\|_2\le \frac{1}{\rho(E,A)}$, since the limit of $\sigma_n(A-\imath\omega E)/\sqrt{1+\omega^2}$ for increasing $\omega$ is $\sigma_n(E)$. The numerical errors corresponding to this second step are therefore of the same order of magnitude as in the first step. But this also shows that a very small stability radius $\rho$ gives very large backward errors.

\subsection{The complete procedure}\label{sec:complete}

The combination of the two steps in computing the structured perturbation described in the previous subsection corresponds to
a congruence transformation $Z$ of the form
\[
 Z = \left[ \begin{array}{cc|c} I_n & Y_{12} & 0 \\ Y_{21} & I_n & 0 \\ \hline
0 & 0 & I_m \end{array} \right]\left[ \begin{array}{cc|c} I_n & 0 & 0 \\ 0 & I_n+Y_{22} & 0 \\ \hline 0 & 0 & I_m \end{array} \right] = \left[ \begin{array}{c|c} I_{2n} + \hat Y \\ \hline
0  & I_m \end{array} \right],
\]
 where
 \[
 \|\hat Y\|_F \le 2 ( \|(\Delta^\E_{11},\Delta^\A_{11},\Delta^\E_{22},\Delta^\A_{22})\|_F +\|\Delta^\E_{12}\|_F ) /\delta + {\mathcal O}(\epsilon^2).
\]
Note that the zero blocks created in the first step are not destroyed in the second step and the error growth of the two stages just add together (except for the second order terms).

It follows that forcing the pH structure of the pencil \eqref{system} requires a growth of a factor $1/\rho(E,A)$ in the perturbations of the blocks $E_\Delta$, $A_\Delta$, $B_\Delta$ and $C_\Delta$, but not in $D_\Delta$.

If one wants to find the corresponding errors in the representation $R$, $J$, $G$ and $P$, we can use the linear transformation between the two representations which yields
\[
 \left[ \begin{array}{rr} - \Delta_R & \Delta_G \\ \Delta_J & -\Delta_P  \end{array} \right]=   \frac{1}{\sqrt{2}} X\left[ \begin{array}{ccc} \Delta_A & \Delta_B \\ \Delta_A^\mathsf{H} & \Delta_C^\mathsf{H}  \end{array} \right]
\]
which shows that the backward errors are of the same order of magnitude.

\begin{remark}\label{rem:2}{\rm
We remark that we did not attempt to preserve passivity; we only made sure that the pencil structure is preserved. But if the original perturbation $s\Delta_\E-\Delta_\A$ did not destroy passivity, then the restoration also does not destroy it, since it is a congruence
transformation on the pencil $\S(s)$. This follows from the discussion in the beginning of this section. 
}
\end{remark}

\subsection{Passivity restoration}\label{sec:passrestore}

As we have discussed in Remark~\ref{rem:2}, when the original perturbation did not destroy passivity then the procedure will
still deliver a passive system. However, in many applications the system starts out as a passive system model and then discretization or model reduction may destroy passivity. Whether this has happend can be observed by checking the
eigenvalues of the pencils eigenstructure of the pencils \eqref{system} (or  \eqref{systemeq}) with a structure preserving method. If this pencil  has purely imaginary eigenvalues or if the pencil is singular or has index greater than one, which can be checked
by computing the rank of $\A$ when projected to the kernel of $\E$, then the underlying system (\ref{gstatespace}) is not passive any longer. In this case it has been a difficult and essentially still an open problem to find the smallest perturbation to the system matrices  $\{ E,A,B,C,D\}$ in order to restore passivity.
One would hope that this requires a correction on the order of the perturbation that has been already comitted; see \cite{AlaBKMM10,Bru11,BruS12,FreJ04,FreJV07,Gri03,Gri04,GusS01,OveV05}, mostly for the case of standard state space systems.

For descriptor systems this question was mostly open, but our procedure  from the last subsection suggest an immediate solution to the problem. We can first perturb the pencil \eqref{system} (or  \eqref{systemeq}) so that it does not have purely imaginary eigenvalues anymore, actually one wants to produce a reasonable margin arround the imaginary axis, where there should be no eigenvalues, see the procedures in \cite{AlaBKMM10,BruS12} for the standard case.

But before one can use these procedures one needs a perturbation that fixes the pencil to be regular and of index one. This can be done as follows.
If the matrix $E$ is not already in partitioned form
\begin{equation}\label{pf}
E=\left [\begin{array}{cc} E_{11} & 0 \\ 0 & 0 \end{array}\right ]
\end{equation}
with $E_{11}$ positive definite, then one can achieve this via a spectral decomposition, Cholesky factorization, or  singular value decomposition of $E\geq 0$. However in many applications this partitioning already exists: see \cite{BeaGM20} for a canonical form, where the structure is discussed in the pH descriptor form or \cite{BunBMN99} for the general case.

Let us therefore assume that $E$ has the partitioned form \eqref{pf} and partition  $A,B,C$ conformally as
\[
A=\left [\begin{array}{cc} A_{11} & A_{12} \\ A_{21} & A_{22} \end{array}\right ],
\ B=\left [\begin{array}{c} B_1 \\ B_2 \end{array}\right ], \ C=\left [\begin{array}{cc} C_1 & C_2 \end{array}\right ].
\]
Then the system \eqref{system} is index one if and only if  the matrix
\[
\hat S :=\left [ \begin{array} {ccc} 0 & A_{22} & B_2 \\ A_{22}^\mathsf{H} & 0 &  C_2^\mathsf{H}   \\ B_1^\mathsf{H} & C_2 &D+D^\mathsf{H} \end{array} \right ],
\]
is invertible. For passivity we also need that $D+D^\mathsf{H}>0$. So we should at least perturb the pencil so that
$D+\Delta_D+(D+\Delta_D)^\mathsf{H} >\mu I$, where $\mu$ is the perturbation level (from discretization or model reduction) that has led to the system matrices $\{E,A,B,C,D\}$. This can be easily done by taking  $\Delta_D=\frac{\mu}2I$. If the so perturbed pencil $s(\E+\Delta_{\mathcal E}) -(\A +\Delta_{\mathcal A})$ is not index one, then further perturbations are necessary. If the original pencil $sE-A$ is index one, \ie $A_{22}$ is invertible or if the matrices $[A_{22}, B_2]$
and $[A_{22}^{\mathsf{H}}, C_2^{\mathsf{H}}]$ have full rank, \ie the system is controllable and observable at $\infty$
then one can increase $\mu$ further until $\hat S$ is invertible or remove the uncontrollable part, see \cite{BunBMN99}.

\section{Numerical results}\label{sec:numerical}
In this section we describe numerical experiments illustrating the results of the previous section.
The numerical tests were carried out in Matlab version R2019a running on an Intel Core i5 processor, with machine precision $\epsilon=2.2204e-16$. In this first test, we generated a passive system $\{A,B,C,D,E\}$ with a stability radius for the pencil $sE-A$ of the order of $0.5$. The stability radius was computed  with 5 digits of accuracy as $\rho(E,A)= 4.0537e-01$. We then perturbed the structured pencil ${\mathcal S}(s)$ in  \eqref{system}
with a random perturbation of the form \eqref{struct1}
and of approximate norms $\delta_i=10^{-i}$ for $i=1,2, \ldots, 10$, and applied the iterative procedure of
Section~\ref{sec:complete}. We report in Table \ref{tab:table1} the quantities
$ \delta(E,A):= \| ( \Delta^{\mathcal A}_{11},\Delta^{\mathcal A}_{22},\Delta^{\mathcal E}_{11},\Delta^{\mathcal E}_{22} ) \|_F$
as a function of the number of iterations needed to reach convergence. The first column (for $k=0$) corresponds to the initial perturbations of the order
of $\delta_i=10^{(-i)}$. The next columns indicate the convergence behaviour, which is at least quadratic (and possibly cubic).

\begin{table}[!ht]
  \begin{center}
    \caption{Evolution of $\delta_k(E,A)$ as function of the number of iterations}
    \label{tab:table1}
    \begin{tabular}{c|cccc} 
 $\delta$ &  $\delta_0(E,A)$ & $\delta_1(E,A)$ &   $\delta_2(E,A)$ & $\delta_3(E,A)$  \\   \hline
1.e-01 &  5.3979e-01  & 3.1042e-02  & 1.3663e-07  & 3.2135e-16 \\
1.e-02 &  8.1884e-02  & 1.0068e-04  & 3.3695e-15  & 2.8047e-17 \\
1.e-03 &  6.4600e-03  & 3.8518e-08  & 4.1879e-18   \\
1.e-04 &  6.4211e-04  & 5.3011e-11  & 3.4952e-19    \\
1.e-05 &  5.2796e-05  & 4.5678e-14  & 7.2183e-20    \\
1.e-06 &  6.6959e-06  & 8.9386e-17     \\
1.e-07 &  6.2294e-07  & 3.5734e-20     \\
1.e-08 &  7.8891e-08  & 1.8682e-22     \\
1.e-09 &  7.8703e-09  & 1.6507e-23     \\
1.e-10 &  8.3537e-10  & 1.1050e-24     \\
    \end{tabular}
  \end{center}
\end{table}
In the second table, we look at how close the transformation $Z=I+Y$ that is restoring the structure of the pencil, is to identity, by comparing
$\|Y\|_F= \|Z-I_{2n+m}\|_F$ and $\delta$, the initial unstructured perturbation. Clearly, they are of the same order, indicating that
the restoration is very reasonable and of the same order as the original perturbation, provided the stability radius is not too small.
The third column gives the ratio $\delta_1(E,A)/[\delta_0(E,A)]^2$ for the structured error in the first iteration, which suggests that the process is at least quadratically convergent, and probably cubically convergent (which is often the case in Hermitian eigenvalue problems).
The last column is a verification of the bound in Lemma \ref{lembound}. The fact that the quantities $\sqrt{2}\|Y\|_F\rho(E,A)/\delta_0(E,A)$ are close to 1
indicates that the condition numbers of the matrices $K_1$ and $K_2$ are close to $\rho(E,A)$.

\begin{table}[!ht]
  \begin{center}
    \caption{Convergence rate and condition estimate}
    \label{tab:table2}
    \begin{tabular}{c|ccc} 
   $\delta$ & $\|Y\|_F$ & $\frac{\delta_1(E,A)}{[\delta_0(E,A)]^2}$ &  $\frac{\sqrt{2}\|Y\|_F\rho(E,A)}{\delta_0(E,A)}$ \\ \hline
   1.e-01 &  6.2266e-01  &  1.0654e-01 &  6.6129e-01 \\
   1.e-02 &  7.9702e-02  &  1.5016e-02 &  5.5800e-01 \\
   1.e-03 &  6.8401e-03  &  9.2299e-04 &  6.0701e-01 \\
   1.e-04 &  6.8668e-04  &  1.2857e-04 &  6.1306e-01 \\
   1.e-05 &  7.6133e-05  &  1.6387e-05 &  8.2668e-01 \\
   1.e-06 &  9.3534e-06  &  1.9937e-06 &  8.0080e-01 \\
   1.e-07 &  7.1741e-07  &  9.2085e-08 &  6.6021e-01 \\
   1.e-08 &  9.0728e-08  &  3.0017e-08 &  6.5929e-01 \\
   1.e-09 &  7.4229e-09  &  2.6649e-07 &  5.4069e-01 \\
   1.e-10 &  6.9376e-10  &  1.5835e-06 &  4.7610e-01 \\
    \end{tabular}
  \end{center}
\end{table}

In Table \ref{tab:table3} we look at the effect of the stability radius on the restoration results. We modified the previous model in order to have a stability radius that
is arbitrarily small, but yet larger than the perturbations added to the pencil. We used initial perturbations of the order of $\delta=1.e-10$ and let the
stability radius $\rho(E,A)$ vary between $1.e-1$ and $1.e-6$. The second column shows that the transformation $Z:=I_{2n+m}+Y$ starts to diverge from
the identity, but one can see from the next columns that one iteration step is enough to restore the original structure, and this for perturbations of the order of $\delta=1.e-10$! The last column indicates again that the stability radius $\rho(E,A)$ is a very good estimate of the conditioning of the restoration matrices $K_1$ and $K_2$.

\begin{table}[!ht]
  \begin{center}
    \caption{Effect of the stability radius on the convergence}
    \label{tab:table3}
    \begin{tabular}{ccccc} 
   $\rho(E,A)$ & $\|Y\|_F$ & $\delta_0(E,A)$ & $\delta_1(E,A)$ &  $\frac{\sqrt{2}\|Y\|_F\rho(E,A)}{\delta_0(E,A)}$ \\ \hline
   9.9594e-02 &  1.2223e-09 &  7.6689e-10 &  3.3150e-24 &  2.2449e-01 \\
   3.7125e-02 &  1.4023e-09 &  6.9962e-10 &  1.9251e-24 &  1.0523e-01 \\
   1.3499e-02 &  1.4400e-08 &  5.7975e-10 &  2.6391e-23 &  4.7417e-01 \\
   4.8692e-03 &  4.6806e-09 &  5.0612e-10 &  1.3357e-23 &  6.3683e-02 \\
   1.7506e-03 &  2.7512e-08 &  5.9770e-10 &  4.6125e-23 &  1.1395e-01 \\
   6.2759e-04 &  1.1548e-07 &  7.2317e-10 &  1.8410e-22 &  1.4173e-01 \\
   2.2373e-04 &  1.6259e-07 &  6.6532e-10 &  3.2569e-22 &  7.7321e-02 \\
   7.8553e-05 &  1.5850e-06 &  5.1837e-10 &  3.7833e-21 &  3.3968e-01 \\
   2.6375e-05 &  6.6021e-06 &  6.3500e-10 &  1.0618e-20 &  3.8780e-01 \\
   7.6220e-06 &  1.7707e-05 &  6.7353e-10 &  6.2209e-20 &  2.8338e-01 \\
    \end{tabular}
  \end{center}
\end{table}

It should be pointed out that passive systems are also stable and that in practice their stability radius is never so close to 0 as in the above example. In fact, when forcing the stability radius to be so small, we meanwhile lost the property of passivity in this example.

\section{Concluding remarks} \label{sec:conclusion}
When computing the eigenstructure of even matrix pencils associated with port-Hamiltonian descriptor system using a structured generalized eigenvalue method, one can expect
to loose the special structure present in the corresponding even pencil. This structure is also responsible for the special symmetry that is present in the computed spectrum.
But the question remains whether the computed spectrum actually corresponds to a nearby passive system.
In this paper, we showed that this is indeed the case, provided the perturbations satisfy some reasonable bounds. The construction of the nearby port-Hamiltonian system that corresponds exactly to the computed spectrum, was obtained by a congruence transformation that is very near the identity matrix.
We have performed a backward error analysis and shown that the departure from the identity is of the same order as the numerical errors induced by the eigenvalue solver, except for a moderate growth factor that depends on the stability radius of the poles of the system.

This procedure can also be applied to any para-Hermitian function $\Phi(s):=\T(s)+\T^\mathsf{H}(-s)$, as long as the transfer function
$\T(s)$ is stable, but not passive. We also show how to possibly exploit the ideas developed in this paper, in order to address the more challenging
problem of restoring the passivity of a perturbed system, that lost its passivity because of a perturbation.

\end{document}